\documentclass[12pt]{amsart}
\usepackage{amsmath,amsthm}
\usepackage{amsfonts}
\usepackage[psamsfonts]{amssymb}

\title[Chevalley--Shephard--Todd's theorem]{On Chevalley--Shephard--Todd's theorem in positive characteristic}
\author{Abraham Broer}
\address{D\'epartement de math\'ematiques\\ et de statistique\\
Universit\'e de Montr\'eal\\
C.P. 6128, succursale Centre-ville\\
Montr\'eal (Qu\'ebec)\\
Canada H3C 3J7}

\email{broera@DMS.UMontreal.CA}
\date\today
\thanks{}
\dedicatory{To Gerald Schwarz, on the occasion of his 60-th anniversary}

\theoremstyle{plain}

\newtheorem{theorem}{Theorem}

\theoremstyle{remark}
\newtheorem*{remark}{Remark}

\newtheorem{example}{Example}

\textwidth=6.5in\textheight=9in
\topmargin=-1.4cm
\oddsidemargin 0pt
\evensidemargin 0pt

\def\F{{\mathbb F}}                            


\def\GL{\mathop{\operatorname{GL}}\nolimits} 
 
\def\GU{\mathop{\operatorname{GU}}\nolimits} 
\def\SU{\mathop{\operatorname{SU}}\nolimits} 
\def\GO{\mathop{\operatorname{GO}}\nolimits} 
\def\SO{\mathop{\operatorname{SO}}\nolimits} 
\def\ROPlus{\mathop{\operatorname{R^+O}}\nolimits} 
\def\Sp{\mathop{\operatorname{Sp}}\nolimits} 
\def\Sg{{\mathfrak S}} 




\def\Tr{\mathop{\operatorname{Tr}}\nolimits}   
\def\Stab{\mathop{\operatorname{Stab}}\nolimits} 

\def\gotq{{\mathfrak q}} 
\def\gotP{{\mathfrak P}} 
\def\gotM{{\mathfrak M}} 


\begin{document} 
\maketitle
\begin{abstract}
Let $G$ be a finite group acting linearly on the vector space $V$ over a field of
arbitrary characteristic. The action is called {\em coregular} if the 
invariant ring is generated by algebraically independent homogeneous invariants and
 the {\em direct summand property} holds if 
there is a  surjective $k[V]^G$-linear map $\pi:k[V]\to k[V]^G$.

The following Chevalley--Shephard--Todd type theorem is proved.
Suppose $V$ is an irreducible $kG$-representation, then the action is coregular if and only if  $G$ is generated by
pseudo-reflections and the direct summand property holds.
\end{abstract}

\section*{Introduction}
Let  $V$ be a  vector space of dimension $n$ over a field $k$. A linear transformation $\tau:V\to V$ is called a {\em pseudo-reflection}, if its fixed-points space $V^\tau=\{v\in V; \tau(v)=v\}$ is a linear subspace
of codimension one. Let $G<\GL(V)$ be a finite group acting linearly on $V$. Then $G$ acts by algebra automorphisms on the coordinate ring $k[V]$, which is by definition
the symmetric algebra on the dual vector space $V^*$. 
We shall say that $G$ is a {\em pseudo-reflection group}
if $G$ is generated by pseudo-reflections; it is called a {\em non-modular } group if $|G|$ is not divisible
by the characteristic of the field. The action is called {\em coregular} if the 
invariant ring is generated by $n$ algebraically independent homogeneous invariants.

A well-known theorem
of Chevalley-Shephard-Todd~\cite[Chapter 6]{Benson} says that if the group is non-modular
then $G$ is a pseudo-reflection group if and only if the action is coregular.

By a theorem of Serre~\cite[Theorem 6.2.2]{Benson} the implication that coregularity of
the action implies that $G$ is a pseudo-reflection group is true even without the condition
of non-modularity. This is not true for the other implication: there are pseudo-reflection groups
whose action is  not coregular.

Coxeter, Shephard and Todd classified all pseudo-reflection groups in characteristic zero.
More recently the irreducible pseudo-reflection groups were classified over
any characteristic, by Kantor, Wagner, Zaleski\u{\i}  and Sere\v{z}kin. Using this
classification Kemper--Malle~\cite{KemperMalle1997} decided which irreducible
pseudo-reflection groups possess the coregular property and which do not.
They observed that those irreducible pseudo-reflection groups that possess the
coregularity property are exactly those such that the actions are coregular for all the point stabilizers of non-trivial subspaces. 

We say that the {\em direct summand property} holds if 
there is a  surjective $k[V]^G$-linear map $\pi:k[V]\to k[V]^G$ respecting the gradings.
For a non-modular group the direct summand property always holds, because in that case
we can take the {\em transfer} $\Tr^G$ as projection, defined by
$$\Tr^G: k[V]\to k[V]^G:\ \Tr^G(f)=\sum_{\sigma\in G}\sigma(f);$$
since for any invariant $f$ we have $\Tr^G({|G|^{-1}f)}=f$.
Also the coregular property implies the direct summand property.

In this article we show first that if the direct summand property holds for $G$ then the
direct summand property holds for all the point-stabilizers of subspaces of $V$,
cf. Theorem~\ref{induction}. Then using this and the results of Kemper--Malle we show
for irreducible $G$-actions that the action is coregular if and only if $G$ is a pseudo-reflection 
group and the direct summand property holds, cf. Theorem~\ref{main}. We conjectured before that this also holds without the irreduciblity condition, cf.~\cite{Broer2005}. Elsewhere we show that the converse is also  true if $G$ is abelian, cf.~\cite{BroerA}.

In the first section we show that the direct summand property is inherited by point-stabilizers.
In the second section we recall Kemper--Malle's classification of irreducible pseudo-reflection
groups that are not coregular, and describe the other tools used in the proof of the main
theorem. In the last section we give the details of the calculations.

\section{The direct summand property and point stabilizers}
For elementary facts on the invariant theory of finite groups we refer to~\cite{Benson},
for a discussion of the direct summand property and the different see~\cite{Broer2005}. 
The transfer map extends to the quotient field of $k[V]$. We recall that the (Dedekind) different $\theta_G$ of the $G$-action on $V$ can be defined as the largest degree homogeneous
form $\theta_G\in k[V]$ such that $\Tr^G(\frac{f}{\theta_G})$ is without denominator, i.e.
$\Tr^G(\frac{f}{\theta_G})\in k[V]^G$, for all $f\in k[V]$; it is unique up to a multiplicative scalar. The direct summand property holds if and only if there exists a $\tilde{\theta}_G\in k[V]$ such
that $\Tr^G(\frac{\tilde{\theta}_G}{\theta_G})=1$ and then we can take as $k[V]^G$-linear projection
$$\pi:k[V]\to k[V]^G:\ \pi(f):=\Tr^G(\frac{\tilde{\theta}_Gf}{\theta_G}).$$

In Kemper--Malle's classification Steinberg's classical result is often used saying that the coregular property is inherited by point stabilizers of linear subspaces.
We shall prove that also the direct summand property is inherited by point stabilizers of linear subspaces. 

The key point in the proof of both results is that the affine group $V^G$ acts on $V$ by translations,  namely
$\tau_u:v\mapsto v+u$ ($u\in V^G$, $v\in V$), commuting with the linear $G$-action.

\begin{theorem}\label{induction}
Let the finite group $G$  act linearly on the vector space $V$ over the field $k$ and
let $H$ be the point-stabilizer of
a linear subspace $U\subset V$. 

If the $G$ action on $V$ has the direct summand property then the 
$H$ action on $V$ also has the direct summand property.
\end{theorem}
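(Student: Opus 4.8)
The strategy is to run the argument through the Dedekind different $\theta_G$ and the transfer, using the criterion recalled just above: the $G$-action has the direct summand property exactly when some homogeneous $\tilde\theta_G\in k[V]$ satisfies $\Tr^G(\tilde\theta_G/\theta_G)=1$. The task is therefore to produce such an element for $H$ out of one for $G$. Two harmless reductions come first. Since $H$ fixes $U$ pointwise we have $U\subseteq V^H$, while the point-stabilizer of $V^H$ is contained in that of $U$; hence $H$ is exactly the point-stabilizer of $V^H$, and we may take $U=V^H$. One may also pass to $\bar k$ (a graded $k[V]^G$-linear retraction of $k[V]$ onto $k[V]^G$ exists over $k$ if and only if it does after the faithfully flat extension $k\to\bar k$) and assume $1\neq H\subsetneq G$ with the characteristic dividing $|H|$, the remaining cases being trivial.

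The relevance of the translations is that $\tau_u\colon v\mapsto v+u$ is an $H$-equivariant automorphism of $V$ for every $u\in V^H$, since $hu=u$; so the $H$-action, its transfer, its different $\theta_H$ and the ring $k[V]^H$ are all invariant under translation along $V^H$, whereas $G$ has no such symmetry. I would use this to compare $\Spec k[V]^G$ and $\Spec k[V]^H$ over the image of $U$. A point $u_0\in U$ in general position has isotropy group exactly $H$ in $G$ --- for generic $u_0$, $u_0\in V^g$ forces $U\subseteq V^g$, i.e.\ $g\in H$ --- so near $u_0$ only $H$ acts, the other points of the orbit $Gu_0$ being at positive distance, and the $H$-equivariant translation $\tau_{u_0}$ identifies the completed local ring of $\Spec k[V]^G$ at the image of $u_0$ with that of $\Spec k[V]^H$ there. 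Equivalently, the prime $\mathfrak P=I(U)$ of $k[V]$ has inertia group exactly $H$ in $G$, so $k[V]^G\subseteq k[V]^H$ is unramified at the prime below $\mathfrak P$, and multiplicativity of the Dedekind different in the tower $k[V]^G\subseteq k[V]^H\subseteq k[V]$ gives
$$\theta_G=\theta_H\cdot\delta,\qquad\delta\in k[V]^H,\quad\delta\notin I(U).$$

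With this in hand, write $\Tr^G=N\circ\Tr^H$, where $N\colon k[V]^H\to k[V]^G$, $N(f)=\sum_i g_i(f)$, is the relative transfer over left coset representatives of $H$ in $G$. Feeding a $\tilde\theta_G$ with $\Tr^G(\tilde\theta_G/\theta_G)=1$ through this and using $\theta_G=\theta_H\delta$ yields $N(\beta/\delta)=1$ with $\beta:=\Tr^H(\tilde\theta_G/\theta_H)\in k[V]^H$; note that $\beta$ lies in the different ideal $\mathfrak D_H:=\Tr^H(\tfrac1{\theta_H}k[V])$ of $k[V]^H$, and the direct summand property for $H$ is precisely the statement that this homogeneous ideal is the unit ideal. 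The final step --- and the one I expect to be the main obstacle --- is to deduce $1\in\mathfrak D_H$ from the relative-transfer identity $N(\beta/\delta)=1$ and the fact that $\delta$ is a unit at $\mathfrak P$. This is not formal: $\beta$ typically has positive degree (its degree equals that of $\delta$), so one cannot read off a nonzero constant from $\beta$ alone, and the conjugate primes $g_i\,I(U)$ with $g_i$ outside the setwise stabilizer of $U$ genuinely interfere with the reduction of $N$ modulo $I(U)$. It is here that the translations must be used a second time: after translating $\tilde\theta_G$ and the coset data into sufficiently general position, one expects the reduction of $N(\beta/\delta)=1$ modulo $I(U)$ to collapse to the inertia part and to force a unit into $\mathfrak D_H$. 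A subsidiary difficulty, to be dealt with before any of this, is to justify the tower formula for the Dedekind different and the equivalence of ``$\mathfrak D_H$ is the unit ideal'' with the direct summand property in a setting where $k[V]^G$ and $k[V]^H$ need not be Cohen--Macaulay --- which is exactly why the argument is kept at the level of the transfer and the different rather than invoking an \'etale slice theorem.
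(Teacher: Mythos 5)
Your setup---the factorization $\theta_G=\theta_H\cdot\theta_{G/H}$ with $\theta_{G/H}\notin\gotP$ (where $\gotP=I(U)$), the identification of $H$ as the inertia group of $\gotP$, and the reformulation of the goal as showing that the homogeneous ideal $I_H:=\Tr^H\bigl(\tfrac{1}{\theta_H}k[V]\bigr)\subseteq k[V]^H$ is the unit ideal---is essentially the paper's. But the proposal stops exactly where the proof has to be made: you concede that deducing $1\in I_H$ from the relative-transfer identity $N(\beta/\delta)=1$ is ``not formal'' and only ``expect'' that putting things in general position will make the reduction modulo $\gotP$ collapse to the inertia part. That step \emph{is} the theorem, and the obstacles you yourself name (the conjugate primes $g_i\gotP$ interfering with reducing $N$, the positive degree of $\beta$) are genuine; no argument is given. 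There is also a smaller inaccuracy: $\theta_{G/H}$ is only an $H$-semi-invariant, not an element of $k[V]^H$, so one must pass to a power $\theta_{G/H}^e\in k[V]^H$, and the coset sum then carries the character $\chi$ of $\theta_G$, which your identity $N(\beta/\delta)=1$ silently suppresses.

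The paper closes the gap in two short steps, and the translations are applied to the ideal $I_H$ rather than to $\tilde\theta_G$ or the coset data. First, multiply $\Tr^G(\tilde\theta/\theta_G)=1$ by the invariant power $\theta_{G/H}^e$ and pull it inside the $H$-transfer, using $\theta_G=\theta_{G/H}\theta_H$ and the semi-invariance of $\theta_G$; this exhibits $\theta_{G/H}^e$ as an element of $I_H$, and since $\theta_{G/H}^e\notin\gotq=\gotP\cap k[V]^H$ one concludes $I_H\not\subseteq\gotq$. Second, suppose $I_H$ were proper, hence contained in $\gotM_0$; for every $u\in U$ the translation $\tau_u$ commutes with the $H$-action, fixes the linear forms in $\gotP$ and therefore $\theta_H$, and commutes with $\Tr^H$, so the induced automorphism of $k[V]$ maps $I_H$ onto itself while carrying $\gotM_0$ to $\gotM_u$; hence $I_H\subseteq\bigcap_{u\in U}\gotM_u=\gotP$ by the Nullstellensatz (after extending to $\bar k$), so $I_H\subseteq\gotq$, contradicting the first step. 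If you want to repair your version, replace the ``general position'' step by this invariance argument: no reduction of the relative transfer $N$ modulo $\gotP$ is needed anywhere.
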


\begin{proof}
We write $A:=k[V]$, $C:=k[V]^H$ and $B=A^G$. The prime ideal generated by the linear forms vanishing on $U$ is denoted by $\gotP\subset A$; its intersection with $C$ is the prime ideal $\gotq=\gotP\cap C$. The inertia subgroup of $\gotP$ coincides with $H$:
$$H=\{\sigma\in G; (\sigma -1)(A)\subseteq \gotP\}.$$

Let $\theta_G$ and $\theta_H$ be the two (Dedekind) differents with respect to the $G$-action and the $H$-action on $V$.  In particular
$\Tr^G({A\over \theta_G})\subseteq B$, and $ \Tr^H({A\over \theta_H})\subseteq C.$

Let $V^\alpha\subset V$ be a linear subspace of codimension one, defined as the zero set of the linear form $x_\alpha$. Then $x_\alpha$ divides $\theta_G$ if and only if there is a pseudo-reflection in $G$ with reflecting hyperplane $V^\alpha$, or in other words there exists a $g\in G$ such that for all $a\in A$, $g(a)-a\in x_\alpha A.$
Now for such a pseudo-reflection $g$ we have
$$g\in H\iff V^\alpha\supseteq U\iff x_\alpha\in \gotP.$$
It follows that $\theta_H$ is the part of $\theta_G$ involving the powers of linear forms 
$x_\alpha$, such that $x_\alpha\in \gotP$.  Let $\theta_{G/H}$ be the part of $\theta_G$ involving the powers of linear forms $x_\alpha$, such that $x_\alpha\not\in \gotP$, then $\theta_G=\theta_{G/H}\cdot\theta_H$.
So $\theta_H$ and $\theta_{G/H}$ are relatively prime, and more importantly
$\theta_{G/H}\not\in \gotP$

The homogeneous element $\theta_G$ is a $G$-semi-invariant for some character $\chi:G\to k^\times$. Similarly $\theta_H$ is an $H$-semi-invariant. The quotient $\theta_{G/H}=\theta_G/\theta_H$ is an element of $A$, and is also an $H$-semi-invariant. So there is a power $\theta_{G/H}^e$ that is an absolute $H$-invariant, i.e., $\theta_{G/H}^e\in C$, but 
$$\theta_{G/H}^e\not\in \gotP\cap C=\gotq.$$

Assume now that $B$ is a direct summand of $A$ as graded $B$-module; hence there exists
a homogeneneous $\tilde{\theta}\in A$ such that
$\Tr^G({\tilde{\theta}\over \theta_G})=1$.
We have to prove that the action of $H$ also has the direct summand property, or that the ideal
$$I_H:=\Tr^H({A\over \theta_H})\subseteq C$$
is in fact equal to $C$.

We shall first show that $$\theta_{G/H}^e\in I_H.$$ 
Since $\theta_{G/H}^e\not\in \gotq$,  it will follow that
$I_H\not\subseteq \gotq.$

Let $g_1,\ldots,g_s$ be right coset representatives of $H$ in $G$, i.e., we have a disjoint union $G=\cup_{i=1}^s Hg_i$. Then
\begin{eqnarray*}
\theta_{G/H}^e&=&\theta_{G/H}^e\cdot \Tr^G({\tilde{\theta}\over\theta_G})\\
&=&\theta_{G/H}^e\cdot \Tr^H\left(\sum_{i=1}^sg_i({\tilde{\theta}\over\theta_G})\right)\\
&=&\Tr^H\left(\theta_{G/H}^e\cdot \sum_{i=1}^sg_i({\tilde{\theta}\over\theta_G})\right)\\
&=&\Tr^H\left(\theta_{G/H}^e\cdot {\sum_{i=1}^s\chi^{-1}(g_i)g_i(\tilde{\theta})\over\theta_G})\right)\\
&=&\Tr^H\left(\frac{\theta_{G/H}^{e-1}\cdot \sum_{i=1}^s\chi^{-1}(g_i)g_i(\tilde{\theta})}{\theta_H})\right)\in \Tr^H({A\over\theta_H})=I_H.
\end{eqnarray*}

Suppose now that $I_H$ is a proper ideal.  Since it is a homogeneous ideal of $C$ it is then contained in the maximal homogeneous ideal $\gotM_0$ of $A$, the ideal of polynomials all vanishing at the origin $0\in V$. We shall show that then even $I_H\subseteq \gotP$, which is a contradiction.

To prove this we  can assume that $k$ is algebraically closed.
If $u\in U$, then the affine transformation $\tau_u:v\mapsto v+u$ commutes with the linear $H$ action, since
$$\tau_u(h\cdot v)=h\cdot v+u=h\cdot (v+u)=h\cdot(\tau_u(v)).$$
So it induces an algebra automorphism $\alpha=\tau_u^*$ of $A$ commuting with the $H$-action,
by $$\alpha(f)(v)=(\tau_u^*\cdot f)(v):=f(v-u)$$ moving the maximal ideal $\gotM_0$ into the maximal ideal $\gotM_u$ of 
polynomials in $A$ vanishing at $u$. It also commutes with $\Tr^H$, and fixes the linear forms of $\gotP$, so it fixes $\theta_H$. But then
$$I_H=\Tr^H({A\over \theta_H})=\Tr^H({\alpha(A)\over \theta_H})=
\alpha\left(\Tr^H\left({A\over \theta_H}\right)\right)\subseteq \alpha(\gotM_0)\subseteq \gotM_u.$$
So $I_H\subseteq \cap_{u\in U}\gotM_u$. By Hilbert's Nullstellensatz $\gotP= \cap_{u\in U}\gotM_u$
and $$I_H\subseteq \gotP\cap C=\gotq.$$
This is a contradiction, so $I_H$ is not a proper ideal, i.e.,
$$\Tr^H({A\over\theta_H})=C,$$
which implies that the direct summand property holds for the $H$ action.
\end{proof}

\section{Main result and tools for the proof}
In this section we describe the tools we used to prove our main theorem.

\begin{theorem}\label{main} 
Let $G$ be an irreducible  pseudo-reflection group group acting on $V$. Then the
action is coregular if and only if $G$ is a pseudo-reflection group and the  direct summand
property holds.
\end{theorem}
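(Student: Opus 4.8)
The two directions are very unequal in difficulty. One of them is soft: if the action is coregular then $G$ is generated by pseudo-reflections by Serre's theorem, and the direct summand property is automatic for a coregular action, since $k[V]$ is then a free --- hence split --- graded module over the polynomial ring $k[V]^G$ (as was recalled in the introduction). So the entire content lies in the converse, and the plan is to prove its contrapositive: \emph{if $G$ is an irreducible pseudo-reflection group whose action on $V$ is not coregular, then the direct summand property fails for $G$.}

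To get a handle on such $G$ I would invoke the classification of the irreducible pseudo-reflection groups (Kantor, Wagner, Zaleski\u{\i} and Sere\v{z}kin) together with Kemper--Malle's analysis, which determines precisely which members of the list have non-coregular action; in characteristic zero there are none, so what remains is an explicit, essentially finite family of modular groups (with parameters such as the underlying prime power $q$). Theorem~\ref{induction} now does the organizing: by its contrapositive, the direct summand property already fails for $G$ if it fails for the point-stabilizer $H$ of some subspace $U$. Combining this with Steinberg's theorem (point-stabilizers of pseudo-reflection groups are again pseudo-reflection groups) and with Kemper--Malle's observation that an irreducible pseudo-reflection group is coregular exactly when all its point-stabilizers of non-trivial subspaces have coregular action, an induction on $|G|$ reduces the problem to those non-coregular groups on the list for which every proper point-stabilizer is already coregular --- a short list of base cases.

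For each base case the verification is a direct computation. I would first make the different $\theta_G \in k[V]$ completely explicit: it is a product $\prod_\alpha x_\alpha^{a_\alpha}$ over the reflecting hyperplanes $V^\alpha = \{x_\alpha = 0\}$, with the exponent $a_\alpha$ determined by the pointwise stabilizer of $V^\alpha$ in $G$ (one less than its order in the tame case, with the familiar correction when $p$-transvections occur). Since $\Tr^G$ is degree-preserving, an element $\tilde\theta$ with $\Tr^G(\tilde\theta/\theta_G)=1$ would have to be homogeneous of degree $\deg\theta_G$; it is therefore enough to show that the homogeneous ideal $\Tr^G(k[V]/\theta_G) \subseteq k[V]^G$ is proper, i.e. that every form of degree $\deg\theta_G$ is carried by $f \mapsto \Tr^G(f/\theta_G)$ into the maximal graded ideal. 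I would establish this either by a Hilbert-series count combined with knowledge of a few low-degree invariants, or --- in the spirit of the Nullstellensatz step in the proof of Theorem~\ref{induction} --- by localizing $k[V]^G$ at a carefully chosen homogeneous prime and exhibiting the obstruction there.

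The genuine difficulty, I expect, is concentrated in this last step: for each group, and uniformly along each parametrized family, producing a concrete certificate that the direct summand property fails. This needs solid control of $k[V]^G$ --- its Hilbert series and a handful of explicit invariants --- and of the behaviour of $\Tr^G$ in the modular regime, together with careful bookkeeping of the transvection contributions to $\theta_G$. By comparison the reduction steps --- the contrapositive of Theorem~\ref{induction}, Steinberg's theorem, and Kemper--Malle's observation --- are essentially formal.
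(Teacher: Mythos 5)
Your easy direction is fine, and your instinct to run the hard direction through the contrapositive of Theorem~\ref{induction} together with Kemper--Malle's list is indeed the paper's strategy. But your reduction step has a genuine gap. You invoke ``Steinberg's theorem'' in the form ``point-stabilizers of pseudo-reflection groups are again pseudo-reflection groups''; in positive characteristic this is false --- Steinberg's theorem says that point stabilizers inherit \emph{coregularity}, so it presupposes coregularity of $G$ --- and the failure of your version is precisely what the paper exploits: for most groups on the list the chosen point stabilizer is a $p$-group \emph{not} generated by pseudo-reflections. Moreover your induction on $|G|$ cannot close: a non-trivial point stabilizer $H$ of a non-trivial subspace $U$ fixes $U$ pointwise, so $V$ is not an irreducible $H$-module, and hence the inductive hypothesis --- the theorem itself, stated only for irreducible groups --- never applies to $H$; the unrestricted statement ``pseudo-reflection group with the direct summand property is coregular'' is exactly the conjecture of \cite{Broer2005}, which is not available (it is proved elsewhere only for abelian groups). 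So even after you locate a non-coregular point stabilizer (which Kemper--Malle's observation guarantees), you have no tool to conclude that the direct summand property fails \emph{for it}, which is what the contrapositive of Theorem~\ref{induction} needs.

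The paper fills precisely this hole by supplying, case by case, a concrete stabilizer $H$ together with a criterion for failure of the direct summand property that presupposes neither irreducibility nor the theorem being proved: if $H$ is a $p$-group with the direct summand property it must be generated by transvections (\cite[Corollary 4]{Broer2005}); if $H$ is an abelian pseudo-reflection group, the direct summand property is equivalent to coregularity (\cite{BroerA}), with coregularity excluded by the identities $\prod_i d_i=|H|$ and $\sum_i d_i=\delta_H+n$; and in the one resistant family ($\SU_3(q)\leq G<\GU_3(q)$) the contraction property $J=(J\cdot k[V])\cap k[V]^H$ is violated for an explicit hypersurface invariant ring. Your alternative plan for the ``base cases'' --- showing directly that $\Tr^G(k[V]/\theta_G)$ is a proper ideal of $k[V]^G$ via Hilbert series or localization for the classical groups themselves --- is only a sketch and is far heavier than descending to a small abelian or $p$-group stabilizer; as written, neither your reduction nor your base-case verification is actually carried out, so the proposal does not yet amount to a proof.
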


It is already known that if the action is coregular then $G$ is a pseudo-reflection group and
the direct summand property holds; it follows from Serre's theorem~\cite[Theorem 6.2.2]{Benson}.
For the other direction we use Kemper--Malle's classification of irreducible pseudo-reflection groups not
having the coregular property. We shall use their notation. 

\begin{theorem}[Kemper--Malle~\cite{KemperMalle1997}]\label{KemperMalle}
Let $G$ be an irreducible  pseudo-reflection group group. Then it does not  have the coregular property
if and only if it occurs in the following list. 

(I) (Unitary pseudo-reflection groups) $\SU_n(q)\leq G \leq \GU_n(q)$, $n\geq 4$, and
$\SU_3(q)\leq G < \GU_3(q)$.

(II) (Symplectic pseudo-reflection groups) $\Sp_n(q)$, $n\geq 4$ and $n=2m$ even.

(III-a) (Orthogonal reflection groups of odd characteristic) $q$ odd: $\Omega_n^{(\pm)}(q)< G\leq \GO_n^{(\pm)}(q)$, except
$\GO_3(q)$, $\ROPlus_3(q)$, $\GO_4^{-}(q)$.

(III-b)  (Orthogonal pseudo-reflection groups of even characteristic) $q$ even: $\SO^{(\pm)}_{2m}(q)$, $2m\geq 4$, except $\SO^{-}_4(q)$.

(IV) (Symmetric groups) $\Sg_{n+2}$, $p|(n+2)$, $n\geq 3$.

(V) (Exceptional cases) (i) $W_3(G_{30})=W_3(H_4)$, (ii) $W_3(G_{31})$, (iii) $W_5(G_{32})$, 
(iv) $W_3(G_{36})=W_3(E_7)$, (v) $W_3(G_{37})=W_3(E_8)$, (vi) $W_5(G_{37})=W_5(E_8)$ and
(vii) $W_2(G_{34})=3\cdot U_4(3)\cdot 2_2$.
\end{theorem}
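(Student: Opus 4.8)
The plan is to reduce the statement to a finite list of families and then to decide coregularity within each. The starting point is the classification of irreducible pseudo-reflection groups over an arbitrary field, due to Coxeter, Shephard and Todd in characteristic zero and to Kantor, Wagner, Zaleski\u{\i} and Sere\v{z}kin in positive characteristic. In the non-modular case the Chevalley--Shephard--Todd theorem recalled in the introduction guarantees that every pseudo-reflection group is coregular, so none of these can occur on the list; this disposes of characteristic zero and, more generally, of every group whose order is prime to the characteristic. It therefore remains to examine the genuinely modular irreducible pseudo-reflection groups, which the classification organizes into the classical families over finite fields (the general linear, special linear, unitary, symplectic and orthogonal groups), the symmetric groups $\Sg_{n+2}$ in their reflection representation reduced modulo a prime dividing the degree, and a finite number of modular reductions of the exceptional Shephard--Todd groups $G_i$.

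For the coregular side of the classification (the groups that do \emph{not} appear on the list) I would exhibit explicit algebraically independent generators or invoke structural results. For the full general linear group Dickson's theorem provides the classical polynomial generators, so $\GL_n(q)$ is coregular, and $\SL_n(q)$ together with the intermediate groups are treated by the same Dickson invariants with one generator replaced; this handles the base of the inductions. For the remaining coregular families one produces a generating set of exactly $n=\dim V$ homogeneous invariants $f_1,\dots,f_n$ and certifies polynomiality by checking that they are algebraically independent and that the Hilbert series of $k[V]^G$ equals the product form $\prod_i (1-t^{d_i})^{-1}$, where $d_i=\deg f_i$; these two facts together force $k[V]^G=k[f_1,\dots,f_n]$. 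Throughout, Steinberg's theorem that coregularity is inherited by point stabilizers of linear subspaces is the organizing principle: it lets one assemble coregularity of a large group from that of its stabilizers and, read contrapositively, propagates \emph{non}-coregularity downward through the subgroup lattice, which is exactly how the small-rank exceptional cases enter.

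For the non-coregular side (the groups on the list) the task is to certify that the invariant ring is not polynomial. Several obstructions are available: one may compute the Hilbert series of $k[V]^G$ and observe that it is not of the above product form; one may show that every minimal homogeneous generating set has strictly more than $n$ elements; or one may show that $k[V]^G$ fails to be Cohen--Macaulay, which already precludes polynomiality. Since the Molien series formula is unavailable in the modular case, the Hilbert series itself has to be obtained by other means, typically through explicit generators and relations or a Gröbner-basis computation. For $\Sg_{n+2}$ with $p\mid(n+2)$ and for the symplectic groups $\Sp_{2m}(q)$ the relevant invariant rings have been computed and the failure of polynomiality read off directly; for the unitary and orthogonal families the same conclusion is reached family by family, the delicate point being the precise location of the boundary between coregular and non-coregular intermediate groups $\SU_n(q)\leq G\leq \GU_n(q)$ and $\Omega_n^{(\pm)}(q)< G\leq \GO_n^{(\pm)}(q)$.

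The main obstacle is exactly this modular case-analysis for the unitary and orthogonal families, together with the isolated small-rank exceptions such as $\GO_3(q)$, $\GO_4^{-}(q)$ and $\SO_4^{-}(q)$, which behave differently from the generic members of their families and must be settled by hand. These require explicit, and in practice computer-assisted, computations of generators, relations and Hilbert series, carried out uniformly in the field size $q$; ensuring that a single argument covers all prime powers $q$ rather than treating them one at a time is the most demanding part, and is where Steinberg's inheritance result and careful bookkeeping of the degrees of the basic invariants carry the weight.
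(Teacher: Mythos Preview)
This theorem is not proved in the paper at all: it is quoted verbatim from Kemper--Malle~\cite{KemperMalle1997} and used as a black box input to the proof of Theorem~\ref{main}. There is therefore no ``paper's own proof'' to compare against; your outline is a plausible high-level summary of the strategy in the original Kemper--Malle article, but the present paper neither reproduces nor sketches that argument.

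What the paper \emph{does} with this statement is the converse of what you describe. You frame the task as establishing, for each group on the list, that its invariant ring is not polynomial (via Hilbert series, generator counts, or failure of Cohen--Macaulayness). The paper instead takes the list as given and, for each group on it, exhibits a point stabilizer $H$ for which the \emph{direct summand property} fails---typically by finding an $H$ that is a $p$-group not generated by transvections, or an abelian $p$-group whose invariant ring is not polynomial---and then invokes Theorem~\ref{induction} to conclude that $G$ itself lacks the direct summand property. So if your intent was to supply the missing proof of Theorem~\ref{KemperMalle}, be aware that the paper deliberately outsources it; and if your intent was to match what the paper actually does in Section~3, then the target property is the direct summand property, not coregularity, and the method is exhibiting bad point stabilizers rather than computing invariant rings of $G$ directly.
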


\begin{remark}
In comparing Kemper--Malle's calculations with ours the reader should be aware that
they work with the symmetric algebra of $V$ and we with the coordinate ring of $V$.
See also the comments in \cite{DerksenKemper2000} on Kemper-Malle's article.
\end{remark}

\subsection{Tools}
To prove our theorem we shall exhibit for every pseudo-reflection group in Kemper--Malle's list
an explicit point-stabilizer $H$ such that for the $H$-action on $V$ the direct summand
property does not hold. Then by Theorem~\ref{induction} the $G$-action on $V$ does not have
the direct summand property either.

In most cases we found a point stabilizer $H$ that is a $p$-group. Then we can use the following tools
to show that the $H$-action does not have the direct summand property.

If $H$ is a $p$-group acting on $V$ and the direct summand property holds then $H$ is
generated by its transvections, cf.~\cite[Corollary 4]{Broer2005}. So if $H$ is not generated by transvections then the direct summand property does not hold.

Often $H$ is  abelian. Then we can use that for abelian pseudo-reflection groups the direct summand property holds if and only if the action is coregular, cf.~\cite{BroerA}.

For induction purposes the following trivial remark is useful.
Let $H$ be a group. We shall say that two $kH$-modules $V_1$ and $V_2$ are {\em equivalent} if one is obtained
from the other by adding a trivial direct summand, for example $V_2\simeq V_1\oplus k^r$.
Then the action on one is coregular (has the direct summand property, et cetera) if and only 
if the other is coregular (has the direct summand property, et cetera).

Sometimes the following is useful to disprove coregularity.
If the action is coregular with fundamental degrees $d_1,\ldots,d_n$. Then
$\prod_{i=1}^n d_i=|G|$ and $\sum_{i=1}^n d_i=\delta_G+n$, e.g. \cite[\S 2.5]{Broer2005},
where $\delta_G$ is the differential degree, i.e., the degree of the different $\theta_G$. We give two examples.

\begin{example}\label{abelian}
(i) Let $k=\F_{q^2}$, $q=p^r$ and $V=k^{2n}$, $2n\geq 4$, with standard basis $e_1,\ldots,e_{2n}$.
Consider the group
$$G_n:=\{\left(\begin{matrix}I&O\\B&I\end{matrix}\right);\ \overline{B}=-B^T\},$$
where 
$B$ is an anti-hermitian $n\times n$ matrix with coefficients in $\F_{q^2}$, and where  $\overline{B}_{ij}:=B_{ij}^q$.
It is normalized by 
$$N:=\{\left(\begin{matrix}A&O\\ O&{\overline A}^{-T}\end{matrix}\right);\ A\in \GL(n,\F_{q^2})\},$$
and $N$ acts transitively on the $\frac{q^{2n}-1}{q^2-1}$ hyperplanes containing 
the subspace $<e_{n+1},\ldots,e_{2n}>$.
Take the hyperplane $<e_2,\ldots,e_{2n}>$; its point-stabilizer $H$ consists of all matrices
in $G_n$ where all coefficients
of $B$ are $0$ except possibly $b_{11}$. Its invariant ring has differential degree $q-1$.
So the differential degree of the $G_n$ action is 
$$\delta_{G_n}=\frac{q^{2n}-1}{q^2-1}(q-1)$$

Let $K$ be the point stabilizer of the subspace $<e_{3},\ldots,e_{2n}>$. Then $K\simeq G_2$
and the action of $K$ on $V$ is equivalent to the $G_2$ action on $k^4$.
Suppose  the $G_2$-action is coregular with fundamental degrees $d_1,\ldots,d_4$. Then
$d_1=d_2=1$, since the first two coordinate functions are invariants. And $d_1d_2d_3d_4=|G_2|=q^4$.
So $(d_1,d_2,d_3,d_4)=(1,1,p^r,p^s)$ for some $r\geq 1, s\geq 1$ and $\sum_id_i=\delta_{G_2}+n$, i.e.
$1+1+p^r+p^s=q^3-q^2+q-1+4$.
Implying that $2\equiv 3$ modulo $p$, which is a contradiction. So the action of $K$ 
is not coregular. Since it is abelian it does not have the direct summand property either.

Conclusion: the action of transvection group $G_n$ on $k^{2n}$, $n\geq 2$, is not coregular and does not have the direct summand property.

(ii) Let $k=\F_q$, $q=p^r$, $V=k^{2n}$, $2n\geq 4$, with standard basis $e_1,\ldots,e_{2n}$.
Consider the group
$$G_n:=\{\left(\begin{matrix}I&O\\B&I\end{matrix}\right);\ B=B^T\},$$
where  $B$ is a symmetric $n\times n$ matrix with coefficients in $\F_q$.
It is normalized by 
$$N:=\{\left(\begin{matrix}A&O\\ O&A^{-T}\end{matrix}\right);\ A\in \GL(n,\F_q)\}.$$
As in (i) we can calculate the differential degree of the $G_n$-action, it is 
$$\delta_{G_n}=\frac{q^n-1}{q-1}(q-1)=q^n-1.$$

Then as in (i) we can consider the point-stablizer $H$ of $<e_3,\ldots,e_{2n}>$ and obtain
the same conclusion. The action of $G_n$ on $k^{2n}$, $n\geq 2$ is not coregular and does not have the
direct summand property.
\end{example}

And the last tool we shall use to prove that the direct summand property does not hold is the following.
If the direct summand property holds for the action of $G$ on $V$ and $J\subset k[V]^G$ an ideal,
then $J=(J\cdot k[V])\cap k[V]^G$, cf.\cite[Proposition 6(ii)]{Broer2005}. We give an example of
its use.

\begin{example}\label{GU3}
Let $k=\F_{q^2}$ and $V=k^3$ with standard basis $e_1,e_2,e_3$ and coordinate  functions
$x_1,x_2,x_3$.
Let $H$ be the point stabilizer of $<e_3>$ inside $\GU_3(q)$, and $\tilde{H}$ the
point stabilizer of $<e_3>$ inside $\SU_3(q)$. Or explicitly,
$$H=\{\left(\begin{matrix}1&0&0\\a&b&0\\c&d&1\end{matrix}\right);\ b^{q+1}=1, d=-ba^q,c+c^q+a^{q+1}=0\}$$
and $\tilde{H}$ is the normal subgroup where $b=1$.
Let $\eta$ be a primitive $q+1$-st root of unity and 
$$\tau:=\left(\begin{matrix}1&0&0\\0&\eta^{-1}&0\\0&0&1\end{matrix}\right).$$
Then $H=<\tilde{H},\tau>$.

Both point stabilizer groups have the algebraically independent invariants 
$$x_1,\ F:=x_1x_3^q+x_2^{q+1}+x_3x_1^q\
\hbox{ and } N(x_3):=\prod_{\sigma \in H/\Stab_H(x_3)}\sigma(x_3),$$
of degrees $1, q+1$ and $q^3$ respectively. Since $|H|=(q+1)q^3$ they form a generating set of
the invariant ring $k[V]^H$, cf.~\cite[Proof of Proposition 3.1]{KemperMalle1997}.

Let $$h:=N(x_2)=\prod_{\sigma \in \tilde{H}/\Stab_{\tilde{H}}(x_2)}\sigma(x_2)=x_2\left(x_2^{q^2-1}-x_1^{q^2-1}\right).$$
Then by construction $h$ is $\tilde{H}$-invariant and $\tau\cdot h=\eta h$. So $h^{q+1}$ is the smallest power of $h$ that is $H$-invariant. Let $f$ be any $\tilde{H}$-invariant such that $\tau\cdot f=\eta f$.
Since $\tau$ is a pseudo-reflection we have that $\tau(f)-f=(\eta-1)f$ is divisible by $x_2$.
Since $f$ is also $\tilde{H}$-invariant it is also divisible by every $\sigma(x_2)$, $\sigma\in \tilde H$,
so is divisible by $h$. Using powers $h^i$ we get similar results for other $H$-semi-invariants.
We get
$$k[V]^{\tilde{H}}=\oplus_{i=0}^{q}k[V]^H h^i$$
and so 
$$k[V]^{\tilde{H}}=k[x_1,F,N(x_3),h];$$
in particular it is a hypersurface ring.
Similarly for $H_m=<\tilde{H},\tau^m>$, for $m|(q+1)$, we get
$k[V]^{H_m}=k[x_1,F,N(x_3),h^{(q+1)/m}]$.
In any case 
$$(x_1,F,N(x_3),h^{(q+1)/m})k[V]=(x_1,x_2^{q+1},x_3^{q^3})k[V]=(x_1,F,N(x_3))k[V].$$
If the direct summand property holds for $H_m$ acting on $V$, then for any ideal $J\subset k[V]^{H_m}$ we
have $J=(J\cdot k[V])\cap k[V]^{H_m}$. In particular, it follows for the maximal homogeneous ideal
$k[V]^{H_m}_+$ of $k[V]^{H_m}$ that
$$k[V]^{H_m}_+=(x_1,F,N(x_3),h^{(q+1)/m})k[V]^{H_m}=(x_1,F,N(x_3))k[V]^{H_m}.$$
So $x_1,F$ and $N(x_3)$ generate the maximal homogeneous ideal $k[V]^{H_m}_+$ and also the algebra  $k[V]^{H_m}$.
But this is a contradiction if $m\neq q+1$.
Conclusion: the action of $H_m$ on $V=k^3$ does not satisfy the direct summand property
if $m|(q+1)$ and $m\neq q+1$.
\end{example}

\section{Details}
In this last section we shall establish explicitly for every pseudo-reflection group not having
the coregular property in Kemper--Malle's list in Theorem~\ref{KemperMalle} a point stabilizer not
having the direct summand property.
For more information on some of the involved classical groups, for
example Witt's theorem, see~\cite{Aschbacher}.

\subsection{Families}
(I) (Unitary pseudo-reflection groups) $\SU_n(q)\leq G \leq \GU_n(q)$, $n\geq 4$, and
$\SU_3(q)\leq G < \GU_3(q)$.

Let first $n=2m\geq 4$ be even. Then there is a basis $e_1,\ldots, e_{2m}$ of $V=\F_{q^2}^n$ such
that the associated Gram matrix is 
$$J=\left(\begin{matrix}O&I\\ I&O
\end{matrix}\right),$$
where $I$ is the identity $m\times m$-matrix and $O$ the zero $m\times m$-matrix.
So an $n\times n$ matrix $g$ with coefficients in $\F_{q^2}$ is in $\GU_n(q)$ if 
and only if $g^TJ\overline{g}=J$, where $\overline{g}$ is the matrix obtained from
$g$ by raising all its coefficients to the $q$-th power.
Let $H$ be the point stabilizer in $GU_n(q)$ of the maximal isotropic subspace $U=<e_{m+1},\ldots,e_n>$, so
$$H=\{  \left(\begin{matrix}I&O\\ B&I\end{matrix}\right);\ \overline{B}=-B^T\}.$$
If $\SU_n(q)\leq G \leq \GU_n(q)$ then $H$ is also the point stabilizer in $G$ of $U$,
since the index of $G$ in $\GU_n(q)$ is relatively prime to $p$ and $H$ is a $p$-group.
We encountered this group in example~\ref{abelian}(i), and we conclude that
the direct summand property does not hold for $H$.

If $n=2m+1\geq 5$ is odd, then the stabilizer in $\SU_n(q)\leq G \leq \GU_n(q)$ of a non-singular vector is a reflection group $\SU_{2m}(q)\leq G_1 \leq \GU_{2m}(q)$, so
we can reduce to the even case, which we just handled.

For $\SU_3(q)\leq G < \GU_3(q)$,  see example~\ref{GU3}. This is one of the
rare cases where no point-stablizer could be found that was a $p$-group not
having the direct summand property.

(II) (Symplectic pseudo-reflection groups) $\Sp_n(q)$, $n\geq 4$ and $n=2m$ even.
There is a basis $e_1,\ldots, e_{2m}$ of $\F_q^n$ such that the associated Gram-matrix is
$$J=\left(\begin{matrix}O&I\\ -I&O
\end{matrix}\right),$$
where $I$ is the identity $m\times m$-matrix and $O$ the zero $m\times m$-matrix.
So an $n\times n$ matrix $g$ with coefficients in $\F_q$ is in $\Sp_n(q)$ if 
and only if $g^TJg=J$.
Let $H$ be the point stabilizer of the maximal isotropic subspace $U=<e_{m+1},\ldots,e_n>$, so
$$H=\{  \left(\begin{matrix}I&O\\ B&I\end{matrix}\right);\ B=B^T\}.$$
We encountered this group in example~\ref{abelian}(ii), and conclude that
the direct summand property does not hold for $H$.

(III-a) (Orthogonal reflection groups of odd characteristic) $q$ odd: $\Omega_n^{(\pm)}(q)< G\leq \GO_n^{(\pm)}(q)$, except
$\GO_3(q)$, $\ROPlus_3(q)$, $\GO_4^{-}(q)$.

Let $V=\F_q^n$.
If $n=2m$ is even, then $V$ admits two equivalence classes of non-degenerate
quadratic forms distinguished by their sign; they are not similar. We get
two orthogonal groups $\GO_{2m}^{\pm}(q)$. If $n=2m+1$ is odd then
there are also two equivalence classes of quadratic forms, but they are similar.
For our purposes we need not distinguish the two (classes of) orthogonal groups,
we write $\GO_{2m+1}(q)$. In any case the orthogonal group does not contain transvections and contains two types of reflections (i.e. pseudo-reflections of order two).
If $\sigma$ is a reflection, then its center $(\sigma-1)(V)$ is a one dimensional nonsingular subspace $<u>$. Conversely, to any one dimensional nonsingular subspace
$<u>$ there corresponds a unique reflection. The orthogonal complement
$<u>^\perp$ is an irreducible orthogonal space and there are two possibilities,
so by Witt's lemma there are exactly two conjugacy classes of nonsingular
subspaces $<u>$, hence two conjugacy classes of reflections. Each conjugacy
class generates a normal reflection subgroup of the full orthogonal group of
index $2$. These are the three reflection groups we consider.

Let us first consider $n=2m\geq 4$ and the reflection subgroups $G<\GO^{+}_{2m}(q)$.
So there is a basis $e_1,\ldots, e_{2m}$ of $V$ such
that the associated Gram matrix is 
$$J=\left(\begin{matrix}O&I\\ I&O
\end{matrix}\right),$$
where $I$ is the identity $m\times m$-matrix and $O$ the zero $m\times m$-matrix.
So an $n\times n$ matrix $g$ with coefficients in $\F_{q}$ is in $\GO_n^{+}(q)$ if 
and only if $g^TJ g=J$. Let $H$ be the point stabilizer in $GO_n^{+}(q)$ of the maximal isotropic subspace $U=<e_{m+1},\ldots,e_n>$, then
$$H=\{  \left(\begin{matrix}I&O\\ B&I\end{matrix}\right);\ B=-B^T\}.$$
If $G$ is any of the reflection groups associated to $\GO_{n}^+(q)$ then its index
is $1$ or $2$, so $H$ is also the point stabilizer in $G$ of $U$. Since $H$ is a non-trivial $p$-group and does not contain pseudo-reflections it follows that the direct summand property does not hold for $H$. 

If $n=2m+1\geq 5$ is odd, then there is a non-singular vector $u$ such that
the point  stabilizer of $<u>$  in the reflection group $G<\GO_{2m+1}(q)$ of index $\leq 2$ is a reflection
group $G_1<\GO_{2m}^+$ of index $\leq 2$ acting irreducibly on $u^{\perp}$. We
can use induction.

Consider now $n=2m>4$ and the reflection subgroups $G<\GO^{-}_{2m}(q)$ of index
$\leq 2$. There are two linearly independent non-singular vectors $u_1,u_2$
such that
the point stabilizer of $<u_1,u_2>$ in the reflection group $G<\GO_{2m}^{-}(q)$ of index $\leq 2$ is a reflection
group $G_1<\GO_{2m-2}^+$ of index $\leq 2$ acting irreducibly on $<u_1,u_2>^{\perp}$. We can reduce to the earlier case.

Consider  $\GO_3(q)$, the orthogonal group with respect to the quadratic form $2x_1x_3+x_2^2$.
Let $H$ be the point-stabilizer of $<e_3>$, then
$$H=\{\left(\begin{matrix}1&0&0\\-b&a&0\\\frac{-b^2}{2}&ab&1\end{matrix}\right);\ a^2=1, b\in \F_q\}$$
The point-stabilizer $H^{-}$ of $\GO_3^{-}(q)$ is the subgroup of $H$ where the coefficient $a=1$.
So $H^{-}$ is a $p$-group without transvections, so the direct summand property does not
hold for $H^{-}$.

Let $H$ be the point stabilizer in $\GO_4^{-}(q)$ of an anisotropic line. Then $H$ is isomorphic
to $\GO_3(q)$. So for at least one of the two reflection subgroups of $\GO_4^{-}(q)$ the point
stabilizer $H'$ of the anisotropic line  is $\GO_3^{-}(q)$. So for that one the direct summand
property does not hold. But both reflection subgroups of index two in $\GO_4^{-}(q)$ are
conjugate inside the conformal orthogonal group; thus neither of them has the direct
summand property.

(III-b)  (Orthogonal pseudo-reflection groups of even characteristic) $q$ even: $\SO^{(\pm)}_{2m}(q)$, $2m\geq 4$, except $\SO^{-}_4(q)$.
Let $V=\F_q^n$, where $n=2m\geq 4$ is even. Then $V$ admits two equivalence classes of non-degenerate
quadratic forms distinguished by their sign. We get
two orthogonal groups $\GO_{2m}^{\pm}(q)$. Now the orthogonal groups are
generated by transvections and do not contain reflections.

First consider $n=2m\geq 4$ and a quadratic form with maximal Witt index. Then
there is a basis $e_1,\ldots,e_n$ with dual basis $x_1,\ldots, x_n$ such that the quadratic form becomes
$Q=\sum_{i=1}^m x_ix_{m+i}$ and the Gram matrix
is 
$$J=\left(\begin{matrix}O&I\\ I&O
\end{matrix}\right),$$
where $I$ is the identity $m\times m$-matrix and $O$ the zero $m\times m$-matrix.
So an $n\times n$ matrix $g$ with coefficients in $\F_{q}$ is in $\GO_n^{+}(q)$ if 
and only if $Q=Q\circ g$ (and so $g^TJ g=J$). Let $H$ be the point stabilizer in $GO_n^{+}(q)$ of the maximal isotropic subspace $U=<e_{m+1},\ldots,e_n>$, so
$H$ is the collection of matrices $\left(\begin{matrix}I&O\\ B&I\end{matrix}\right)$
such that $B_{ij}=B_{ji}$ if $1\leq i\neq j\leq m$ and $B_{ii}=0$, for $1\leq i\leq m$.
Since $H$ is a $p$-group without pseudo-reflections, the direct summand property does not hold
for $H$.

Next consider $n=2m\geq 6$ and a quadratic form with non-maximal Witt index.
Then there are two non-singular vectors $u_1,u_2$ such that the
point stabilizer in $\GO_{n}^{-}(q)$ of $<u_1,u_2>$ is $\GO_{n-2}^+(q)$ acting irreducibly
on $<u_1,u_2>^\perp$. And we can reduce to that case.

(IV) (Symmetric groups) $\Sg_{n+2}$, $p|(n+2)$, $n\geq 3$.
Let $W=k^m$ be a vector space over a field of characteristic $p>0$ with basis
$e_1,\ldots,e_m$; we assume $m\geq 5$. The symmetric group $\Sg_m$ acts on $W$ by permuting the basis elements.
The submodule  of codimension one 
$$\tilde{V}=<e_i-e_j; 1\leq i<j\leq m>$$
contains the submodule
spanned by $v=\sum_{i=1}^m e_i$ if and only if $p$ divides $m$. We assume this; so $m=pm'$ for some integer $m'$ and we define
$V$ to be the quotient module $\tilde{V}/<v>$ with dimension $n:=m-2\geq 3$.

For $1\leq j \leq m'$ define $v_j:=\sum_{i=1}^{p} e_{p(j-1)+i}$, then
$v=\sum_{j=1}^{m'} v_j$ and each $v_j\in \tilde{V}$. Write
$\tilde{U_1}=<v_1,\ldots,v_{m'}>\ \subset \tilde{V}$ with image $U_1$ in $V$.

We remark that if for  $\sigma\in \Sg_m$ and $i$ it holds that $\sigma(v_i)\neq v_i$, then since $m\geq 5$ we have
$\sigma(v_i)-v_i\not\in <v>$.  So the point stabilizer of $U_1$ is the natural subgroup
$$H:=\Sg_{\{1,2,\ldots,p\} }\times \Sg_{\{p+1,p+2,\ldots,2p\} }\times\ldots\times \Sg_{\{(m'-1)p+1,\ldots,
m'p\} }\simeq \Sg_p\times \Sg_p\times\ldots \times \Sg_p.$$

Suppose $p$ odd or $p=2$ and $m'$ is even. Then $w:=\sum_{i=1}^m ie_i\in \tilde V$ and we
define $\tilde{U}=\tilde{U_1}+<w>$ with image
$U\subset V$. 
Let $\pi\in H$ such that $\pi(w)=w$ so if $\pi(e_i)=e_j$ then $i$ and $j$ are congruent modulo $p$,
but this is only possible for $\pi\in H$ if $\pi$ is trivial. And if $\pi(w)-w\in\ <v>$, or equivalently if there is a $c\in k$
such that
$$\pi(w)-w=\sum_{i=1}^mie_{\pi(i)}-\sum_{i=1}^m i e_i=
\sum_{i=1}^m(\pi^{-1}(i)-i)e_i=c\sum_{i=1}^m e_i$$
so $\pi^{-1}(i)=i+c$ for all $i$. So $c\in \F_p$ and $\pi$ is a power of
$$\sigma:=(1,2,3,\ldots,p)(p+1,p+2,\ldots,2p)\cdots((m'-1)p+1,(m'-1)p+2,\ldots,m).$$
We conclude that point stabilizer in $G$ of $\tilde{U}$ is now trivial, but the point stabilizer in $G$ of $U$ is not, it is generated by $\sigma$. On the other hand, the fixed point space of $\sigma$ is $U$.
Since the dimension of $U$ is $m'$, its codimension is
$$m-2-m'=(p-1)m'-2>1$$
(if $p=3$ then $m'\geq 2$ and if $p=2$ then $m'\geq 4$, by our assumptions), so $\sigma$ is not
a pseudo-reflection.
So we found a linear subspace whose point stabilizer is a cyclic $p$-group not containing a
pseudo-reflection. So the direct summand property does not hold.

Let now $p=2$ and $m'$ odd and we can assume $k=\F_2$. The point stabilizer $H$ is now an elementary abelian $2$-group of order $2^{m'}$ generated by
the $m'$ transpositions $(1,2), (3,4),\ldots,(m-1,m)$. These are all transvections and the only pseudo-reflections contained in $H$.  We shall show that its invariant ring is not polynomial.
Take as basis $f_1,\ldots, f_{m-2}$ the images in $V$ of the vectors 
$e_1+e_2, e_2+e_3,\ldots, e_{m-2}+e_{m-1}.$ Let $y_1,\ldots,y_m$ be the dual basis.
Then the fixed point set $V^H$ is spanned by $f_1,f_3,f_5,\ldots,f_{m-3}$ and
the fixed-point set $(V^*)^H$ by $y_2,y_4,\ldots,y_{m-2}$. Suppose $k[V]^H$ is a polynomial ring, and
its fundamental degrees $d_1,d_2,\ldots,d_{m-2}$. We must have $|H|=2^{m'}=d_1d_2\ldots d_{m-2}$ and the number of reflections must be $d_1+d_2+\ldots+d_{m'-2}-(m'-2)$. Since we have exactly $m'-1$ independent linear invariants the fundamental invariants degrees there must be $m'-2$ quadratic
generating invariants and one of degree $4$. So the number of reflections is $m'-2+3=m'+1$.
But we have only $m'$ reflections: a contradiction.
So we found a linear subspace whose point stabilizer is an abelian $p$-group, whose ring
of invariants is not a polynomial ring. Therefore the direct summand property does not hold either.

\subsection{Exceptional cases}
Kemper-Malle~\cite{KemperMalle1997} made some explicit calculations to show that several exceptional irreducible reflection groups have a linear subspace whose point stabilizer is not generated by pseudo-reflections or at least its invariant ring is not polynomial. Using MAGMA we checked all these calculations and obtained the more precise result that 
all exceptional irreducible reflection groups without polynomial ring of invariants have in fact a
linear subspace whose point stabilizer is an abelian $p$-group with an invariant ring that is not
a polynomial ring, and so the direct summand property does not hold. In fact in most cases the point stabilizer is not even generated by pseudo-reflections.

(i) $W_3(G_{30})=W_3(H_4)$. According to \cite[p. 76]{KemperMalle1997} there is a point stabilizer of
a two-dimensional linear subspace that is cyclic of order $3$. Since it was already known that  the full pseudo-reflection group has no transvections, it follows that the point stabilizer is not generated by pseudo-reflections. Indeed, we checked that there is a two dimensional linear subspace with point stabilizer of order three and whose generator has two Jordan blocks of size $2$, hence this point stabilizer is an abelian $p$-group not generated by pseudo-reflections.

(ii) $W_3(G_{31})$. According to \cite[p. 76]{KemperMalle1997} there is a point stabilizer of a linear subspace that is not generated by pseudo-reflections and of order $48$, which is not enough for our purposes. We checked that there is a unique orbit of length $960$; fix a point $v$ in
this orbit and let $H$ be its stabilizer (it is indeed of order $48$). Now $H$ has $18$ orbits of length $16$. We took one of them and took the stabilizer, say $K=H_w$. Then it turned out that $K$ has order $3$, generated by a $4\times 4$ matrix whose Jordan form has two blocks of size $2$, so $K$ is the point stabilizer of $<v,w>$ and is a $p$-group not generated by pseudo-reflections.

(iii) $W_5(G_{32})$. According to \cite[p. 79]{KemperMalle1997} there is a one-dimensional linear subspace with point stabilizer a cyclic group of order $5$. Since the full pseudo-reflection group was known to have no transvections it followed that this point stabilizer is not generated by pseudo-reflections. Indeed we checked that there is a unique orbit whose stabilizers have order $5$ and are generated by a $4\times 4$ matrix whose Jordan form has one block of size $4$. So these stabilizers are not generated by pseudo-reflections, not even by $2$-reflections. So by Kemper's theorem \cite[Theorem 3.4.6]{DerksenKemper2000}, the invariant ring is not even Cohen-Macaulay.

(iv) $W_3(G_{36})=W_3(E_7)$. According to \cite[p. 78]{KemperMalle1997} there is a linear subspace
whose point stabilizer of order $24$ is not generated by pseudo-reflections. There is a unique orbit of length $672$; let $N$ be the stabilizer of one of its points , say $v_1$. Now this group $N$ has several orbits of length $180$, but only one of them has stablizers not generated by pseudo-reflections. Take $v_2$ in that orbit and take its stabilizer $N_1=N_{V_1}$ (so its order is $24$ and is not generated by pseudo-reflections). Now this group $N_1$ has orbits whose stabilizers have order $3$. We took $v_3$ in one of those orbits, and
took its stabilizer $N_2$; $N_2$ was cyclic of order $3$, whose Jordan form has two blocks of
size $2$ and one of size three, so $N_2$ is the point stabilizer of $<v_1,v_2,v_3>$ and its generator
is not a pseudo-reflection, not even a $2$-reflection. So the invariant ring is
not even Cohen-Macaulay, \cite[Theorem 3.4.6]{DerksenKemper2000}.

(v) $W_3(G_{37})=W_3(E_8)$. According to \cite[p. 78]{KemperMalle1997} there is a linear subspace having $W_3(E_7)$ as point stabilizer, so by the previous case it also has a linear subspace whose
point stabilizer is a cyclic group of order $3$, whose Jordan form has two blocks of
size $2$ and one each of size one and three. So the invariant ring is not even Cohen-Macaulay, 
\cite[Theorem 3.4.6]{DerksenKemper2000}.

(vi) $W_5(G_{37})=W_5(E_8)$. According to \cite[p. 78]{KemperMalle1997}  there is linear subspace
whose point stabilizer is cyclic of order $5$. Since it was already known that the pseudo-reflection group does not contain any transvections it follows that this point stabilizer is not generated by pseudo-reflections. 
There is a unique orbit whose stabilizers have order $14400$, let $v_1$ be one of its points and
$H$ its stabilizer. Now $H$ has a unique orbit with stabilizers of order $5$, let $v_2$ be one of its
points and $N$ its stabilizer. Then $N$ is indeed cyclic of order $5$ and the Jordan form of its generator
has two blocks of size $4$, so $N$ is the point stabilizer of $<v_1,v_2>$. So the invariant ring is not even Cohen-Macaulay, \cite[Theorem 3.4.6]{DerksenKemper2000}. Larger linear subspaces have a point stabilizer with polynomial ring of invariants.

(vii) $W_2(G_{34})=3\cdot U_4(3)\cdot 2_2$ (it has half the order of $G_{34}$). According to \cite[p. 80]{KemperMalle1997}  there is an explicit three dimensional linear subspace whose point stabilizer $K$ is a $2$-group of order $32$. The point stabilizer is abelian and generated by transvections, but we checked using MAGMA that the 
$K$-invariant  rings of both $V$ and $V^*$ are non-polynomial (compare \cite[p.107]{DerksenKemper2000}).

As shown in  \cite[p.73]{KemperMalle1997} the remaining exceptional cases are all isomorphic as reflection groups to members of one of the families we already considered above.
This finishes the proof of our main theorem.

\end{document}